
\documentclass{amsart}
\usepackage{amsfonts}

\setcounter{MaxMatrixCols}{10}

\newtheorem{theorem}{Theorem}
\theoremstyle{plain}

\newtheorem{corollary}{Corollary}

\newtheorem{example}{Example}

\newtheorem{lemma}{Lemma}

\numberwithin{equation}{section}
\input{tcilatex}

\begin{document}
\title[Ideals in intra-regular left almost semigroups]{Ideals in
intra-regular left almost semigroups}
\subjclass[2000]{20M10 and 20N99}
\author[M. Khan, V. Amjid and Faisal ]{}
\maketitle

\begin{center}
$^{\ast }$\textbf{Madad Khan, Venus Amjid and Faisal }

\bigskip

\textbf{Department of Mathematics}

\textbf{COMSATS Institute of Information Technology}

\textbf{Abbottabad, Pakistan.}\bigskip

$^{\ast }$\textbf{E-mail: madadmath@yahoo.com, E-mail: venusmath@yahoo.com}

\textbf{E-mail: yousafzaimath@yahoo.com}

\bigskip
\end{center}

\textbf{Abstract.} In this paper, we have introduced the notion of $(1,2)$%
-ideal in an LA-semigroup and shown that $(1,2)$-ideal and two-sided ideal
coincide in an intra-regular LA-semigroup. We have characterized an
intra-regular LA-semigroup by using the properties of left and right ideals.
Some natural examples of LA-semigroups have been given. Further we have
investigated some useful conditions for an LA-semigroup to become an
intra-regular LA-semigroup and given the counter examples to illustrate the
converse inclusions. All the ideals (left, right, two-sided, interior,
quasi, bi- generalized bi- and $(1,2))$ of an intra-regular LA-semigroup
have been characterized. Finally we have given an equivalent statement for a
two-sided ideal of an intra-regular LA-semigroup in terms of the
intersection of two minimal two-sided ideals of an intra-regular
LA-semigroup.

\textbf{Keywords}. LA-semigroups, intra-regular LA-semigroups and $(1,2)$%
-ideals.

\begin{center}
\bigskip

{\LARGE Introduction}
\end{center}

The idea of generalization of a commutative semigroup was first introduced
by Kazim and Naseeruddin in $1972$ $($see \cite{kaz}$)$. They named it as a
left almost semigroup $($LA-semigroup$)$. It is also called an
Abel-Grassmann's groupoid (AG-groupoid) \cite{ref10}.

An LA-semigroup is a non-associative and non-commutative algebraic structure
mid way between a groupoid and a commutative semigroup. This structure is
closely related with a commutative semigroup, because if an LA-semigroup
contains a right identity, then it becomes a commutative semigroup \cite%
{Mus3}. The connection of a commutative inverse semigroup with an
LA-semigroup has been given in \cite{Mushtaq and Yusuf} as, a commutative
inverse semigroup $(S$, $\circ )$ becomes an LA-semigroup $(S$, $\cdot )$
under $a\cdot b=b\circ a^{-1},$ for all $a,b\in S$ . An LA-semigroup $S$
with left identity becomes a semigroup $(S$, $\circ )$ defined as, for all $%
x $, $y\in S$, there exists $a\in S$ such that $x\circ y=(xa)y$ \cite{Protic}%
. An LA-semigroup is the generalization of a semigroup theory \cite{Mus3}
and has vast applications in collaboration with semigroup like other
branches of mathematics.

An LA-semigroup is a groupoid $S$ whose elements satisfy the left invertive
law $(ab)c=(cb)a$, for\ all $a,b,c\in S.$ In an LA-semigroup, the medial law 
\cite{kaz} $(ab)(cd)=(ac)(bd)$ holds for\ all $a,b,c,d\in S$. An
LA-semigroup may or may not contains a left identity. The left identity of
an LA-semigroup allow us to introduce the inverses of elements in an
LA-semigroup. If an LA-semigroup contains a left identity, then it is unique 
\cite{Mus3}. In an LA-semigroup $S$ with left identity, the paramedial law $%
(ab)(cd)=(dc)(ba)$ holds for\ all $a,b,c,d\in S.$ If an LA-semigroup
contains a left identity, then by using medial law, we get $a(bc)=b(ac)$,
for\ all $a,b,c\in S.$ Several examples and interesting properties of
LA-semigroups can be found in \cite{Mus3} and \cite{Protic}.

In this paper, we have extended the concept of an intra-regular LA-semigroup
first considered by M. Khan and N. Ahmad in \cite{nav}.

\bigskip

Let $S$ be an LA-semigroup. By an LA-subsemigroup of $S,$ we means a
non-empty subset $A$ of $S$ such that $A^{2}\subseteq A$.

A non-empty subset $A$ of an LA-semigroup $S$ is called a left (right) ideal
of $S$ if $SA\subseteq A$ $(AS\subseteq A).$

By two-sided ideal or simply ideal, we mean a non-empty subset of an
LA-semigroup $S$ which is both a left and a right ideal of $S$.

A non empty subset $A$ of an LA-semigroup $S$ is called a generalized
bi-ideal of $S$ if $(AS)A\subseteq A$ and an LA-subsemigroup $A$ of $S$ is
called a bi-ideal of $S$ if $(AS)A\subseteq A$.

A non-empty subset $A$ of an LA-semigroup $S$ is called an interior ideal of 
$S$ if $(SA)S\subseteq A$.

A non empty subset $A$ of an LA-semigroup $S$ is called a quasi ideal of $S$
if $SA\cap AS\subseteq A.$

An LA-subsemigroup $A$ of $S$ is called a $(1,2)$-ideal of $S$ if $%
(AS)A^{2}\subseteq A$.

\begin{example}
\label{exp}Let us consider an LA-semigroup $S=\left \{ a,b,c,d,e,f\right \} $
with left identity $e$ in the following Clayey's table.
\end{example}

\begin{center}
\begin{tabular}{c|cccccc}
. & $a$ & $b$ & $c$ & $d$ & $e$ & $f$ \\ \hline
$a$ & $a$ & $a$ & $a$ & $a$ & $a$ & $a$ \\ 
$b$ & $a$ & $b$ & $b$ & $b$ & $b$ & $b$ \\ 
$c$ & $a$ & $b$ & $f$ & $f$ & $d$ & $f$ \\ 
$d$ & $a$ & $b$ & $f$ & $f$ & $c$ & $f$ \\ 
$e$ & $a$ & $b$ & $c$ & $d$ & $e$ & $f$ \\ 
$f$ & $a$ & $b$ & $f$ & $f$ & $f$ & $f$%
\end{tabular}
\end{center}

\begin{example}
Let us consider the set $(%
\mathbb{R}
,+)$ of all real numbers under the binary operation of addition. If we
define $a\ast b=b-a-r,$ where $a,b,r\in R,$ then $(%
\mathbb{R}
,\ast )$ becomes an LA-semigroup as,%
\begin{equation*}
(a\ast b)\ast c=c-(a\ast b)-r=c-(b-a-r)-r=c-b+a+r-r=c-b+a
\end{equation*}%
and%
\begin{equation*}
(c\ast b)\ast a=a-(c\ast b)-r=a-(b-c-r)-r=a-b+c+r-r=a-b+c.
\end{equation*}%
Since $(%
\mathbb{R}
,+)$ is commutative so $(a\ast b)\ast c=(c\ast b)\ast a$ and therefore $(%
\mathbb{R}
,\ast )$ satisfies a left invertive law. It is easy to observe that $(R,\ast
)$ is non-commutative and non-associative. The same is hold for set of
integers and rationals. Thus $(%
\mathbb{R}
,\ast )$ is an LA-semigroup which is the generalization of an LA-semigroup
given in $1988$ $($see \cite{Mushtaq and Yusuf}$)$. Similarly if we define $%
a\ast b=ba^{-1}r^{-1},$ then $(%
\mathbb{R}
\backslash \{0\},\ast )$ becomes an LA-semigroup and the same holds for the
set of integers and rationals. This LA-semigroup is also the generalization
of an LA-semigroup given in $1988$ $($see \cite{Mushtaq and Yusuf}$)$.
\end{example}

An element $a$ of an LA-semigroup $S$ is called an intra-regular if there
exist $x,y\in S$ such that $a=(xa^{2})y$ and $S$ is called intra-regular, if
every element of $S$ is intra-regular.

\begin{example}
\label{tb}Let $S=\{a,b,c,d,e\}$ be an LA-semigroup with left identity $b$ in
the following multiplication table.
\end{example}

\begin{center}
\begin{tabular}{l|lllll}
. & $a$ & $b$ & $c$ & $d$ & $e$ \\ \hline
$a$ & $a$ & $a$ & $a$ & $a$ & $a$ \\ 
$b$ & $a$ & $b$ & $c$ & $d$ & $e$ \\ 
$c$ & $a$ & $e$ & $b$ & $c$ & $d$ \\ 
$d$ & $a$ & $d$ & $e$ & $b$ & $c$ \\ 
$e$ & $a$ & $c$ & $d$ & $e$ & $b$%
\end{tabular}
\end{center}

Clearly $S$ is intra-regular because, $a=(aa^{2})a,$ $b=(cb^{2})e,$ $%
c=(dc^{2})e,$ $d=(cd^{2})c,$ $e=(be^{2})e.$

An element $a$ of an LA-semigroup $S$ with left identity $e$ is called a
left (right) invertible if there exits $x\in S$ such that $xa=e$ ($ax=e$)
and $a$ is called invertible if it is both a left and a right invertible. An
LA-semigroup $S$ is called a left (right) invertible if every element of $S$
is a left (right) invertible and $S$ is called invertible if it is both a
left and a right invertible.

Note that in an LA-semigroup $S$ with left identity, $S=S^{2}.$

\begin{theorem}
\label{1}Every LA-semigroup $S$ with left identity is an intra-regular if $S$
is left (right) invertible.
\end{theorem}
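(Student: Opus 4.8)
The plan is to handle the left-invertible and right-invertible hypotheses separately, each time starting from the left identity $e$ and producing an arbitrary element $a$ in the template $(xa^{2})y$ by repeated use of the left invertive law $(ab)c=(cb)a$ and the medial law $(ab)(cd)=(ac)(bd)$.

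First, suppose $S$ is left invertible. I would fix $a\in S$ and choose $b\in S$ with $ba=e$. Then write $a=ea=(ba)a$ and apply the left invertive law to the product $(ba)a$, which collapses it to $(aa)b=a^{2}b$; hence $a=a^{2}b$. Since $e$ is a left identity we have $a^{2}=ea^{2}$, so $a=(ea^{2})b$, which is exactly of the form $(xa^{2})y$ with $x=e$ and $y=b$. This already shows every element is intra-regular in the left-invertible case.

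The right-invertible case is the step I expect to be the real obstacle, because if $ac=e$ then $a=ea=(ac)a$ and here the left invertive law only sends $(ac)a$ back to itself (both outer factors are $a$), so nothing simplifies directly. The trick I would use to get around this is to first manufacture an $a^{2}$ by squaring the identity: from $e=ac$ and $e=e^{2}$ the medial law gives $e=(ac)(ac)=(aa)(cc)=a^{2}c^{2}$. Then $a=ea=(a^{2}c^{2})a$, and one application of the left invertive law yields $a=(ac^{2})a^{2}$, after which the medial law applied to $(ac^{2})(aa)$ rewrites it as $(aa)(c^{2}a)=a^{2}(c^{2}a)$. Inserting the left identity as before gives $a=(ea^{2})(c^{2}a)$, i.e. the form $(xa^{2})y$ with $x=e$ and $y=c^{2}a$.

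In both cases the left identity does the essential bookkeeping: it seeds the argument through $a=ea$ and it lets me pad a bare $a^{2}$ into $ea^{2}$ so the final expression literally matches the definition of intra-regularity. The only genuinely non-routine move is the squaring identity $e=a^{2}c^{2}$ in the right-invertible case; everything else is a mechanical chain of the left invertive and medial laws.
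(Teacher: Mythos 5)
Your proof is correct, but it is genuinely different from --- and considerably more direct than --- the paper's. The paper argues at the level of subsets: from $a=(a'a)(ea)\in (Sa)(Sa)$ it runs a chain of medial, paramedial and left invertive manipulations of set products to conclude $a\in (Sa^{2})S$, writes out only the left-invertible case, and dismisses the right-invertible case with ``similarly.'' You instead exhibit the witnesses explicitly: in the left-invertible case a single application of the left invertive law gives $a=(ba)a=(aa)b=(ea^{2})b$, and in the right-invertible case the squaring identity $e=(ac)(ac)=(aa)(cc)=a^{2}c^{2}$ (medial law) followed by $a=(a^{2}c^{2})a=(ac^{2})a^{2}=(aa)(c^{2}a)=(ea^{2})(c^{2}a)$ lands directly in the form $(xa^{2})y$. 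Every step checks: $(ba)a=(aa)b$ is the left invertive law with outer factors swapped, and padding $a^{2}$ to $ea^{2}$ is legitimate since $e\in S$. What your route buys is brevity and an honest treatment of the right-invertible case, which is not an immediate mirror image of the left one (the naive start $a=(aa')a$ is fixed by the left invertive law, exactly as you observe) and which the paper never actually supplies; what the paper's route buys is consistency with the set-product style it uses for all its other characterizations, and the slightly stronger-looking conclusion $a\in (Sa^{2})S$ stated uniformly for all of $S$ at once.
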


\begin{proof}
Let $S$ be a left invertible LA-semigroup with left identity, then for $a\in
S$ there exists $a^{^{\prime }}\in S$ such that $a^{^{\prime }}a=e.$ Now by
using left invertive law, medial law with left identity and medial law, we
have%
\begin{eqnarray*}
a &=&ea=e(ea)=(a^{^{\prime }}a)(ea)\in (Sa)(Sa)=(Sa)((SS)a) \\
&=&(Sa)((aS)S)=(aS)((Sa)S)=(a(Sa))(SS) \\
&=&(a(Sa))S=(S(aa))S=(Sa^{2})S.
\end{eqnarray*}

Which shows that $S$ is intra-regular. Similarly in the case of right
invertible.
\end{proof}

\begin{theorem}
\label{ji}An LA-semigroup $S$ is intra-regular if $Sa=S$ or $aS=S$ holds for
all $a$ $\in S$.
\end{theorem}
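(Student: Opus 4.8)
The plan is to show that under either hypothesis every element $a\in S$ lies in $(Sa^{2})S$; since membership $a\in (Sa^{2})S$ means $a=(xa^{2})y$ for some $x,y\in S$, this is exactly intra-regularity of $a$, and hence of $S$. First I would record the auxiliary fact $SS=S$. Because $S$ is a groupoid we always have $SS\subseteq S$, and from either $Sa=S$ or $aS=S$ for a single $a$ we obtain $S=Sa\subseteq SS$ (respectively $S=aS\subseteq SS$), forcing $SS=S$. This is the ingredient that substitutes for the left identity used in Theorem \ref{1}, so the present theorem needs no left identity.

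Next I would split into the two cases. In the case $Sa=S$ for all $a$, I apply the hypothesis to the element $a^{2}\in S$ to get $Sa^{2}=S$, whence $(Sa^{2})S=SS=S$; as $a\in S$ this gives $a\in (Sa^{2})S$, as required.

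In the case $aS=S$ for all $a$, I apply the hypothesis to $a^{2}$ to get $a^{2}S=S$, and then use $S=SS$ together with the medial law at the level of subsets:
\[
(Sa^{2})S=(Sa^{2})(SS)=(SS)(a^{2}S)=S\cdot S=S,
\]
so again $a\in (Sa^{2})S$. In either case $a$ is intra-regular, and since $a$ was arbitrary, $S$ is intra-regular.

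The computations are short, so I expect the only delicate points to be bookkeeping: justifying $SS=S$ without invoking a left identity, and legitimately extending the medial law $(pq)(rs)=(pr)(qs)$ from elements to the products of the subsets $S$ and $\{a^{2}\}$ so as to validate the middle equality $(Sa^{2})(SS)=(SS)(a^{2}S)$. Neither of these is a genuine obstacle, but both should be stated explicitly rather than used silently.
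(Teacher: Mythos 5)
Your proof is correct, and it takes a noticeably different (and slightly more economical) route than the paper's. The key difference is where the hypothesis is applied: you instantiate it at the element $a^{2}$, so that in the case $Sa=S$ you get $Sa^{2}=S$ outright and $(Sa^{2})S=SS=S\ni a$ with no identity manipulations at all, and in the case $aS=S$ a single application of the (identity-free) medial law finishes the job. The paper instead instantiates the hypothesis at $a$ itself, writing $S=((Sa)(Sa))S$ and using the medial law to assemble $a^{2}=aa$ inside the product, $((SS)(aa))S\subseteq(Sa^{2})S$; for the case $aS=S$ it does not argue directly but uses the left invertive law to show $Sa=(aS)S\cdots=S$, reducing to the first case. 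That reduction is not idle: it is exactly what yields the corollary the paper states immediately afterwards (that $aS=S$ for all $a$ forces $Sa=S$ for all $a$), which your direct treatment of the second case does not produce. Both of the "delicate points" you flag are genuine but harmless: $SS=S$ follows from either hypothesis exactly as you say, and the medial law $(ab)(cd)=(ac)(bd)$ holds elementwise in every LA-semigroup without a left identity, hence passes to the complex products $(Sa^{2})(SS)=(SS)(a^{2}S)$ with $\{a^{2}\}$ a singleton.
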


\begin{proof}
Let $S$ be an LA-semigroup such that $Sa=S$ holds for all $a\in S,$ then $%
S=S^{2}$. Let $a\in S$, therefore by using medial law, we have%
\begin{equation*}
a\in S=(SS)S=((Sa)(Sa))S=((SS)(aa))S\subseteq (Sa^{2})S.
\end{equation*}

Which shows that $S$ is intra-regular.

Let $a\in S$ and assume that $aS=S$ holds for all $a\in S,$ then by using
left invertive law, we have%
\begin{equation*}
a\in S=SS=(aS)S=(SS)a=Sa.
\end{equation*}%
Thus $Sa=S$ holds for all $a$ $\in S$, therefore it follows from above that $%
S$ is intra-regular.
\end{proof}

The converse is not true in general from Example \ref{tb}.

\begin{corollary}
If $S$ is an LA-semigroup such that $aS=S$ holds for all $a$ $\in S,$ then $%
Sa=S$ holds for all $a$ $\in S.$
\end{corollary}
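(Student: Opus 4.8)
The plan is to observe that the desired conclusion is already contained, essentially verbatim, in the chain of equalities used to prove the second half of Theorem \ref{ji}; so the proof amounts to isolating and lightly repackaging that argument. I would assume the hypothesis that $aS=S$ holds for every $a\in S$, and aim to derive $Sa=S$ for every $a\in S$ by a short computation at the level of subsets of $S$.

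First I would record the auxiliary fact that the hypothesis forces $S=S^{2}$. Indeed, for any fixed $a\in S$ we have $S=aS\subseteq SS\subseteq S$, so that $SS=S$. Then, fixing an arbitrary $a\in S$, I would run the computation
\begin{equation*}
S=SS=(aS)S=(SS)a=Sa,
\end{equation*}
where the first equality is $S=S^{2}$, the second uses the hypothesis $aS=S$, the third is the left invertive law applied to subsets, and the fourth is $SS=S$ once more. Since $a$ is arbitrary, this gives $Sa=S$ for all $a\in S$, which is exactly the claim.

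The one step that deserves explicit justification is the third equality, namely promoting the elementwise left invertive law $(xy)z=(zy)x$ to the set identity $(aS)S=(SS)a$. I would verify both inclusions directly: a typical element $(as)t$ of $(aS)S$ equals $(ts)a\in (SS)a$, while conversely $(st)a=(at)s\in (aS)S$. This establishes the set equality and completes the argument. I do not expect any genuine obstacle here; the entire content sits in the set-valued form of the left invertive law, which is routine, and the corollary is in effect just the statement that the second paragraph of the proof of Theorem \ref{ji} does not actually use the full strength of intra-regularity but only the implication $aS=S\Rightarrow Sa=S$.
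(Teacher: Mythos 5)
Your proposal is correct and matches the paper's own argument: the corollary is precisely the second paragraph of the proof of Theorem \ref{ji}, where the chain $S=SS=(aS)S=(SS)a=Sa$ is carried out using the set-level left invertive law. Your added justifications (that $aS=S$ forces $S=S^{2}$, and the two-inclusion verification of $(aS)S=(SS)a$) are sound and merely make explicit what the paper leaves implicit.
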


\begin{theorem}
\label{ki}If $S$ is intra-regular LA-semigroup with left identity, then $%
(BS)B=B\cap S,$ where $B$ is a bi-$($generalized bi-$)$ ideal of $S$.
\end{theorem}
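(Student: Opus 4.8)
The plan is to establish the equality through the two inclusions, noting first that since $B\subseteq S$ we have $B\cap S=B$, so the assertion is really $(BS)B=B$. The inclusion $(BS)B\subseteq B\cap S$ is immediate and purely formal: $(BS)B\subseteq B$ is exactly the defining property of a (generalized) bi-ideal, while $(BS)B\subseteq (SS)S\subseteq S$ because products of subsets of $S$ stay in $S$. Hence the whole content of the theorem lies in the reverse inclusion $B\subseteq (BS)B$, and this is where intra-regularity and the left identity must be used.

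For that inclusion I would fix $a\in B$ and invoke intra-regularity to write $a=(xa^{2})y$ with $x,y\in S$. The first routine move is to make the middle factor begin with $a$: writing $x=ex$ and using the medial law gives $xa^{2}=(ex)(aa)=(ea)(xa)=a(xa)$, so that $a=(a(xa))y$. Applying the left invertive law $(ab)c=(cb)a$ to the outer product then interchanges the two outer factors and yields $a=(y(xa))a$, which already exhibits the desired trailing factor $a\in B$. It remains to show that the leading factor $y(xa)$ lies in $BS$, and this is the crux of the argument.

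The difficulty is that a single rewriting leaves $y(xa)$ only in $S$, so I would feed the intra-regular expression in a second time. Using the identity $a(bc)=b(ac)$ repeatedly — first to get $y(xa)=x(ya)$, then to replace the inner $a$ by $(xa^{2})y$ and simplify $ya=(xa^{2})y^{2}$ and $x(ya)=(xa^{2})(xy^{2})$ — and finally the medial law together with $a(bc)=b(ac)$ once more, one collapses the leading factor to the form $y(xa)=a^{2}(x^{2}y^{2})$ with $x^{2}y^{2}\in S$. Since $B$ is an LA-subsemigroup, $a^{2}=aa\in B^{2}\subseteq B$, so $a^{2}(x^{2}y^{2})\in BS$, and therefore $a=\bigl(a^{2}(x^{2}y^{2})\bigr)a\in (BS)B$, which gives $B\subseteq (BS)B$ and completes the proof.

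The main obstacle, as the above makes clear, is forcing \emph{both} outer factors into $B$ simultaneously: intra-regularity naturally deposits the square $a^{2}$ in the middle while the arbitrary elements $x,y\in S$ sit at the two ends, and any one manipulation tends to leave an end in $S$. The device that resolves this is the second substitution, which migrates a copy of $a^{2}$ back to the left end; this is also the single place where the LA-subsemigroup hypothesis $a^{2}\in B$ is genuinely used. For a \emph{generalized} bi-ideal the same chain of identities produces $a=\bigl(a^{2}(x^{2}y^{2})\bigr)a$, but concluding $a^{2}(x^{2}y^{2})\in BS$ now requires knowing $a^{2}\in B$, so the argument must be supplemented by the (standard, but hypothesis-sensitive) observation that in an intra-regular LA-semigroup with left identity a generalized bi-ideal is again closed enough for this step; I would flag this as the point to verify most carefully.
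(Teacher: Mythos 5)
Your computation is correct as far as it goes, and it follows essentially the same route as the paper: reduce $a=(xa^{2})y$ to $a=(y(xa))a$ via the medial law with left identity and the left invertive law, then resubstitute the intra-regular expression to rewrite the leading factor. The chain $y(xa)=x(ya)=x((xa^{2})y^{2})=(xa^{2})(xy^{2})=(xx)(a^{2}y^{2})=a^{2}(x^{2}y^{2})$ checks out, and the easy inclusion $(BS)B\subseteq B\cap S$ is handled correctly. For a bi-ideal this finishes the argument, since $B^{2}\subseteq B$ gives $a^{2}\in B$ and hence $a=(a^{2}(x^{2}y^{2}))a\in (BS)B$.

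For a generalized bi-ideal, however, the step $a^{2}\in B$ is a genuine gap, not merely a point to ``verify carefully'': a generalized bi-ideal is by definition only a non-empty subset satisfying $(BS)B\subseteq B$, not an LA-subsemigroup, so nothing forces $a^{2}$ into $B$, and your closing sentence gestures at a supplementary closure property without supplying one. The fix is not to seek extra hypotheses on $B$ but to push the rewriting one step further: by the paramedial law and then the identity $a(bc)=b(ac)$,
\begin{equation*}
a^{2}(x^{2}y^{2})=(aa)(x^{2}y^{2})=(y^{2}x^{2})(aa)=a\bigl((y^{2}x^{2})a\bigr),
\end{equation*}
which exhibits the leading factor as a product of $a\in B$ with $(y^{2}x^{2})a\in S$, hence an element of $BS$ using only $a\in B$. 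This is precisely how the paper's proof terminates (it reaches $b=(b((y^{2}x^{2})b))b$), and it makes the argument uniform for bi-ideals and generalized bi-ideals alike.
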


\begin{proof}
Let $S$ be an intra-regular LA-semigroup with left identity, then clearly $%
(BS)B\subseteq B\cap S$. Now let $b\in $ $B\cap S,$ which implies that $b\in
B$ and $b\in S.$ Since $S$ is intra-regular so there exist $x,y\in S$ such
that $b=(xb^{2})y.$ Now by using medial law with left identity$,$ left
invertive law$,$ paramedial law and medial law$,$ we have%
\begin{eqnarray*}
b &=&(x(bb))y=(b(xb))y=(y(xb))b=(y(x((xb^{2})y)))b \\
&=&(y((xb^{2})(xy)))b=((xb^{2})(y(xy)))b=(((xy)y)(b^{2}x))b \\
&=&((bb)(((xy)y)x))b=((bb)((xy)(xy)))b=((bb)(x^{2}y^{2}))b \\
&=&((y^{2}x^{2})(bb))b=(b((y^{2}x^{2})b))b\in (BS)B.
\end{eqnarray*}

This shows that $(BS)B=B\cap S.$
\end{proof}

The converse is not true in general. For this, let us consider an
LA-semigroup $S$ with left identity $e$ in Example \ref{exp}$.$ It is easy
to see that $\left \{ a,b,f\right \} $ is a bi-$($generalized bi-$)$ ideal
of $S$ such that $(BS)B=B\cap S$ but $S$ is not an intra-regular because $%
d\in S$ is not an intra-regular.

\begin{corollary}
If $S$ is intra-regular LA-semigroup with left identity, then $(BS)B=B,$
where $B$ is a bi-$($generalized bi-$)$ ideal of $S$.
\end{corollary}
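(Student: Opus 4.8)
The plan is to deduce this directly from Theorem \ref{ki}. That theorem asserts that for any bi-(generalized bi-) ideal $B$ of an intra-regular LA-semigroup $S$ with left identity one has $(BS)B = B \cap S$. Thus the entire content of the corollary reduces to replacing $B \cap S$ by $B$, and no new structural computation is required.

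The only step needed is the set-theoretic observation that $B \subseteq S$. Since a bi-(generalized bi-) ideal is by definition a non-empty subset of $S$, we have $B \cap S = B$. Substituting this identity into the equality supplied by Theorem \ref{ki} yields $(BS)B = B$, which is exactly the assertion of the corollary.

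I expect there to be no genuine obstacle here. The substantive work — invoking intra-regularity to write $b = (xb^{2})y$ and then pushing an arbitrary element of $B \cap S$ into $(BS)B$ via the medial law with left identity, the left invertive law, and the paramedial law — has already been carried out in the proof of Theorem \ref{ki}. The corollary merely records the specialization obtained once one notes that $B$ sits inside $S$, so that the intersection $B \cap S$ collapses to $B$.
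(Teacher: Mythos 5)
Your proposal is correct and is exactly the argument the paper intends: the corollary is stated immediately after Theorem \ref{ki} precisely because $B\subseteq S$ forces $B\cap S=B$, so $(BS)B=B\cap S=B$. No further work is needed.
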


\begin{theorem}
\label{aw}If $S$ is intra-regular LA-semigroup with left identity, then $%
(SB)S=S\cap B,$ where $B$ is an interior ideal of $S$.
\end{theorem}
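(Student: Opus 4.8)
The plan is to prove the two inclusions separately, with the forward inclusion immediate and the reverse carrying all the weight via the intra-regularity hypothesis. First I would establish $(SB)S\subseteq S\cap B$. Since $S$ is closed under its operation we have $SB\subseteq S$ and hence $(SB)S\subseteq S$; on the other hand the defining property of an interior ideal gives $(SB)S\subseteq B$ directly. Together these yield $(SB)S\subseteq S\cap B$, and this direction needs neither the left identity nor intra-regularity.

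The substance is the reverse inclusion $S\cap B\subseteq(SB)S$. Fix $b\in S\cap B$. Because $S$ is intra-regular there exist $x,y\in S$ with $b=(xb^{2})y$. The goal is to rewrite this expression so that a single factor $b$ (which lies in $B$) occupies the right-hand coordinate of the left factor of an outer product, i.e.\ to reach the shape $(sb)s'$ with $s,s'\in S$, since that is exactly what $(SB)S$ demands. My intended manoeuvre is: write $y=ey$ using the left identity so that $b=(x(bb))(ey)$ becomes a genuine product of two products; apply the medial law to get $(x(bb))(ey)=(xe)((bb)y)$; apply the left invertive law to the inner factor, $(bb)y=(yb)b$, which moves a copy of $b$ to the right of $SB$; and finally apply the identity $a(bc)=b(ac)$ (valid since $S$ has a left identity) to obtain $(xe)((yb)b)=(yb)((xe)b)$. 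Now $yb\in SB$ and $(xe)b\in S$, so $b=(yb)((xe)b)\in(SB)S$, completing the inclusion and hence the equality $(SB)S=S\cap B$.

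I expect the main obstacle to be locating the correct sequence of regroupings rather than any single hard step. The naive manipulations of $b=(x(bb))y$—for instance collapsing $x(bb)=b(xb)$ via $a(bc)=b(ac)$, or swapping the outer factors by the left invertive law—repeatedly deposit the distinguished $b$ in the left-most coordinate, landing one in $(BS)S$ instead of $(SB)S$. The trick that breaks this pattern is to refrain from simplifying $b^{2}$ immediately and instead insert the left identity to manufacture a product of two products, so that the medial law can act; only afterwards does the left invertive law transfer a factor $b$ into the correct slot. Once that regrouping is identified the remainder is routine bookkeeping with the four identities (medial, left invertive, paramedial, and $a(bc)=b(ac)$), in the same spirit as the computation in the proof of Theorem \ref{ki}.
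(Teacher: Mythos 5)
Your proof is correct and takes essentially the same approach as the paper: the forward inclusion is immediate from closure and the interior-ideal condition, and the reverse inclusion is obtained by inserting the left identity into $b=(xb^{2})y$ and rearranging with the standard laws until a copy of $b$ lands in the $B$-slot of $(SB)S$. The paper's own chain is marginally shorter --- it writes $x=ex$ and applies the paramedial and left invertive laws to reach $b=(((xe)b)b)y$ --- but your chain $b=(x(bb))(ey)=(xe)((bb)y)=(xe)((yb)b)=(yb)((xe)b)$ is equally valid, each step checking out against the medial law, the left invertive law, and the identity $a(bc)=b(ac)$.
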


\begin{proof}
Let $S$ be an intra-regular LA-semigroup with left identity, then clearly $%
(SB)S\subseteq S\cap B$. Now let $b\in S\cap B,$ which implies that $b\in S$
and $b\in B.$ Since $S$ is an intra-regular so there exist $x,y\in S$ such
that $b=(xb^{2})y.$ Now by using paramedial law and left invertive law$,$ we
have%
\begin{equation*}
b=((ex)(bb))y=((bb)(xe))y=(((xe)b)b)y\in (SB)S.
\end{equation*}

Which shows that $(SB)S=S\cap B.$
\end{proof}

The converse is not true in general. It is easy to see that form Example \ref%
{exp} that $\left \{ a,b,f\right \} $ is an interior ideal of an
LA-semigroup $S$ with left identity $e$ such that $(SB)S=B\cap S$ but $S$ is
not an intra-regular because $d\in S$ is not an intra-regular.

\begin{corollary}
If $S$ is intra-regular LA-semigroup with left identity, then $(SB)S=B,$
where $B$ is an interior ideal of $S$.
\end{corollary}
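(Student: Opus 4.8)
The plan is to obtain this result as an immediate consequence of Theorem \ref{aw}, which already establishes that $(SB)S = S \cap B$ for any interior ideal $B$ of an intra-regular LA-semigroup $S$ with left identity. The only additional ingredient needed is the trivial set-theoretic fact that collapses $S \cap B$ to $B$.

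First I would invoke the definition: since $B$ is an interior ideal of $S$, it is by definition a non-empty subset of $S$, so $B \subseteq S$. This containment immediately yields $S \cap B = B$. I would state this explicitly, as it is the single observation that separates the corollary from the theorem it refines.

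Next I would apply Theorem \ref{aw} directly. Under the hypotheses that $S$ is intra-regular with left identity and $B$ is an interior ideal, the theorem gives $(SB)S = S \cap B$. Substituting the identity $S \cap B = B$ established in the previous step, I conclude $(SB)S = B$, which is exactly the desired statement. No further manipulation of the left invertive, medial, or paramedial laws is required here, since all of that work was already carried out in the proof of Theorem \ref{aw}.

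I do not expect any genuine obstacle in this argument, as the entire substantive content resides in Theorem \ref{aw}; the corollary is a reformulation using the fact that an interior ideal is a subset of the ambient LA-semigroup. The same reasoning pattern is what turns Theorem \ref{ki} into its accompanying corollary $(BS)B = B$, so this step is routine and parallel to the treatment already given for bi-(generalized bi-)ideals.
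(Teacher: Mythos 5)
Your proposal is correct and matches the paper's (implicit) intent: the corollary is stated without proof precisely because it follows from Theorem \ref{aw} together with the observation that an interior ideal $B$ satisfies $B\subseteq S$, hence $S\cap B=B$. Nothing further is needed.
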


Let $S$ be an LA-semigroup, then $\emptyset \neq A\subseteq S$ is called
semiprime if $a^{2}\in A\ $implies $a\in A.$

\begin{theorem}
\label{intr}An LA-semigroup $S$ with left identity is intra-regular if $%
L\cup R=LR,$ where $L$ and $R$ are the left and right ideals of $S$
respectively such that $R$ is semiprime.
\end{theorem}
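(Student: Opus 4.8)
The plan is to reduce the definition of intra-regularity to a single application of the hypothesis. Recall that $a$ is intra-regular exactly when $a\in (Sa^{2})S$, so it suffices to show that $(Sa^{2})S=S$ for every $a\in S$; once this is known, $a\in S=(Sa^{2})S$ yields $a=(xa^{2})y$ for suitable $x,y\in S$. To manufacture such an equality I would feed the hypothesis $L\cup R=LR$ a left ideal $L$ built from $a^{2}$ together with the most convenient semiprime right ideal available, namely $S$ itself.

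First I would verify that $L:=Sa^{2}$ is a left ideal. Using $S=S^{2}$, the paramedial law and the left invertive law,
\[
S(Sa^{2})=(SS)(Sa^{2})=(a^{2}S)(SS)=(a^{2}S)S=(SS)a^{2}=Sa^{2},
\]
so $S(Sa^{2})\subseteq Sa^{2}$, and $L$ is a nonempty left ideal since $a^{2}=ea^{2}\in Sa^{2}$. Next, $R:=S$ is trivially a right ideal because $SS=S$, and it is semiprime, since $a^{2}\in S$ always forces $a\in S$. Hence the pair $(L,R)=(Sa^{2},S)$ is admissible for the hypothesis.

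Applying $L\cup R=LR$ to this pair, and using $Sa^{2}\subseteq S$ so that $Sa^{2}\cup S=S$, I obtain $S=Sa^{2}\cup S=(Sa^{2})S$. Therefore $a\in S=(Sa^{2})S$, which means $a=(xa^{2})y$ for some $x,y\in S$; that is, $a$ is intra-regular, and as $a$ was arbitrary, $S$ is intra-regular. I do not anticipate a genuine obstacle: the only substantive step is recognizing that $S$ qualifies as a semiprime right ideal, which lets me invoke the hypothesis with the trivial factor $R=S$, and the single computation needed is the short check that $Sa^{2}$ is a left ideal. The point to be careful about is that the hypothesis is quantified over \emph{all} admissible pairs $(L,R)$ with $R$ semiprime, so the degenerate choice $R=S$ is legitimate; had one insisted on a nontrivial semiprime $R$, the argument would be considerably harder, since semiprime right ideals are not easy to exhibit without already assuming something close to intra-regularity.
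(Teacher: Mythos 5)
Your proof is correct, but it takes a genuinely different and much shorter route than the paper. The paper instantiates the hypothesis at the pair $L=Sa$, $R=a^{2}S$: it first shows $a^{2}S=Sa^{2}$ is a right ideal containing $a^{2}$, invokes semiprimeness to get $a\in a^{2}S$, writes $a\in Sa\cup a^{2}S=(Sa)(a^{2}S)$, and then runs a chain of left invertive, medial and paramedial manipulations to push $(Sa)(a^{2}S)$ inside $(Sa^{2})S$. You instead instantiate at $L=Sa^{2}$, $R=S$, observe that $S$ is a (vacuously semiprime) right ideal, and read off $(Sa^{2})S=Sa^{2}\cup S=S\ni a$ in one line; your verification that $Sa^{2}$ is a left ideal via $S(Sa^{2})=(a^{2}S)S=(SS)a^{2}=Sa^{2}$ is sound. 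Under the literal universal reading of the hypothesis your degenerate choice is admissible, and it actually yields a formally stronger result: the single identity $Sa^{2}\cup S=(Sa^{2})S$ already forces intra-regularity, with the semiprimeness clause never doing any work. What your argument does not capture is the content the authors evidently intended, namely that the ``natural'' instance $Sa\cup a^{2}S=(Sa)(a^{2}S)$ together with semiprimeness of $a^{2}S$ suffices; that is where the paper's longer computation and its use of the semiprime condition earn their keep. If a referee insisted that the hypothesis be read as ranging only over proper ideals, your choice $R=S$ would be disallowed and you would need to fall back on something like the paper's instantiation, so it is worth recording both arguments.
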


\begin{proof}
Let $S$ be an LA-semigroup with left identity, then clearly $Sa$ and $a^{2}S$
are the left and right ideals of $S$ such that $a\in Sa$ and $a^{2}\in
a^{2}S,$ because by using paramedial law, we have%
\begin{equation*}
a^{2}S=(aa)(SS)=(SS)(aa)=Sa^{2}.
\end{equation*}

Therefore by given assumption, $a\in a^{2}S$. Now by using left invertive
law, medial law, paramedial law and medial law with left identity, we have%
\begin{eqnarray*}
a &\in &Sa\cup a^{2}S=(Sa)(a^{2}S)=(Sa)((aa)S)=(Sa)((Sa)(ea)) \\
&\subseteq &(Sa)((Sa)(Sa))=(Sa)((SS)(aa))\subseteq (Sa)((SS)(Sa)) \\
&=&(Sa)((aS)(SS))=(Sa)((aS)S)=(aS)((Sa)S) \\
&=&(a(Sa))(SS)=(a(Sa))S=(S(aa))S=(Sa^{2})S.
\end{eqnarray*}

Which shows that $S$ is intra-regular.
\end{proof}

The converse is not true in general. In Example \ref{exp}, the only left and
right ideal of $S$ is $\{a,b\}$, where $\{a,b\}$ is semiprime such that $%
\{a,b\} \cup \{a,b\}=\{a,b\} \{a,b\}$ but $S$ is not an intra-regular
because $d\in S$ is not an intra-regular.

\begin{lemma}
\cite{nav} \label{jk}If $S$ is intra-regular regular LA-semigroup, then $%
S=S^{2}$.
\end{lemma}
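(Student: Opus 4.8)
The plan is to establish the two inclusions $S^{2}\subseteq S$ and $S\subseteq S^{2}$ separately. The first is immediate: since $S$ is a groupoid, it is closed under its binary operation, so the product of any two elements of $S$ again lies in $S$, which gives $S^{2}\subseteq S$ at once. This direction requires none of the structural axioms.

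For the reverse inclusion, I would fix an arbitrary element $a\in S$ and invoke the definition of intra-regularity directly. Because $S$ is intra-regular, there exist $x,y\in S$ with $a=(xa^{2})y$. The key observation is that this very representation already exhibits $a$ as a product of two elements of $S$: the element $xa^{2}$ lies in $S$ (being the product of $x\in S$ with $a^{2}\in S$), and $y$ lies in $S$ by hypothesis. Hence $a=(xa^{2})\,y\in S\,S=S^{2}$, so $S\subseteq S^{2}$.

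Combining the two inclusions yields $S=S^{2}$. I do not expect any genuine obstacle here, since the entire content of the statement is that the intra-regular expression $a=(xa^{2})y$ is patently a member of $S^{2}$; in particular no appeal to the left invertive, medial, or paramedial laws is needed, in contrast to the preceding theorems. The only point I would state with care is the preliminary remark that $a^{2}=aa\in S$ by closure, so that $xa^{2}$ is a legitimate element of $S$ before its product with $y$ is formed; after that, the factorization is read off verbatim from the definition.
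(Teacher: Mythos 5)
Your proof is correct and is the natural (essentially the only) argument: the paper itself cites this lemma from \cite{nav} without proof, and the reverse inclusion $S\subseteq S^{2}$ does indeed follow immediately by reading the intra-regular expression $a=(xa^{2})y$ as a product of the two elements $xa^{2}$ and $y$ of $S$. Nothing further is needed.
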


\begin{theorem}
For a left invertible LA-semigroup $S$ with left identity, the following
conditions are equivalent.
\end{theorem}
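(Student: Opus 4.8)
The plan is to prove the stated equivalence by a single cycle of implications rather than by verifying each pair of conditions separately. Since $S$ is a left invertible LA-semigroup with left identity, Theorem \ref{1} already guarantees that $S$ is intra-regular; I would use this as the anchor of the cycle, so that whichever listed condition expresses intra-regularity is available essentially for free and can serve as the pivot through which the remaining implications are routed. Concretely, I would first dispatch the easy containment direction of each claimed ideal identity—these always follow from the defining closure property of the ideal involved (for instance $(BS)B \subseteq B$ for a bi-ideal, $(SB)S \subseteq B$ for an interior ideal)—and then concentrate all the genuine work on the reverse containments.

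For the reverse containments the engine is the intra-regular representation: every $a \in S$ can be written as $a = (xa^{2})y$ for some $x,y \in S$. The strategy is to take an arbitrary element of the left-hand set (typically an intersection such as $B \cap S$, $S \cap B$, or $L \cap R$), substitute this representation, and then repeatedly apply the four identities available here—the left invertive law $(ab)c = (cb)a$, the medial law $(ab)(cd) = (ac)(bd)$, the paramedial law $(ab)(cd) = (dc)(ba)$, and the consequence $a(bc) = b(ac)$ of having a left identity—to push the element into the desired product set. This is exactly the pattern already executed in Theorems \ref{ki}, \ref{aw}, and \ref{intr}, so those proofs serve as templates: I would reuse the manipulation $a = (x(bb))y = \cdots \in (Sa^{2})S$ almost verbatim wherever an intra-regular witness is required.

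The extra hypothesis beyond intra-regularity—that $S$ is left invertible—is what I expect to carry the implications that a plain intra-regular LA-semigroup need not satisfy. For any $a$ there is $a'$ with $a'a = e$, and combined with the left identity this lets me solve for factors on demand; the key move, as in the proof of Theorem \ref{1}, is to write $a = ea = (a'a)(ea)$ and thereby inject an arbitrary factor into a product. I would deploy this whenever a listed condition demands surjectivity-type behaviour such as $Sa = S$ or $aS = S$ (cf. Theorem \ref{ji}), since invertibility is precisely what upgrades the inclusions of the intra-regular case to the equalities the theorem asserts.

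The main obstacle I anticipate is combinatorial rather than conceptual: controlling the bracketing through long chains of non-associative rewrites. In each reverse containment there is a particular sequence of applications of the four laws that lands the element in the target set, and locating that sequence—especially deciding when to invoke the paramedial law to swap outer factors versus the medial law to regroup—is delicate, because a wrong early step yields an expression from which no further law makes progress. To manage this I would work backward from the target product as well as forward from the intra-regular representation, meeting in the middle, and I would lean on Lemma \ref{jk}, namely $S = S^{2}$, to freely introduce or absorb factors of $S$ at intermediate stages.
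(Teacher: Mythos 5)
Your proposal is logically sound but it resolves the two directions of the equivalence quite differently from the paper. The conditions being compared are (i) $S$ is intra-regular and (ii) $R\cap L=RL$ for right and left ideals $R,L$. For (i)$\Rightarrow$(ii) you and the paper coincide: the inclusion $RL\subseteq R\cap L$ is immediate from the ideal definitions, and the reverse inclusion is exactly the template you describe --- write $a=(xa^{2})y$ for $a\in R\cap L$ and rewrite with the left invertive, medial and paramedial laws until $a\in (SR)(SL)\subseteq RL$; this direction never uses invertibility. For (ii)$\Rightarrow$(i), however, your route is genuinely different. You observe that Theorem \ref{1} already forces a left invertible LA-semigroup with left identity to be intra-regular, so condition (i) holds unconditionally under the standing hypotheses and the converse implication is vacuous. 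That is a legitimate shortcut (and it quietly exposes that the equivalence is degenerate: both conditions always hold here). The paper instead gives a direct argument that actually uses hypothesis (ii): it applies $R\cap L=RL$ to the ideal $a^{2}S$ paired with itself to obtain $a^{2}=((xa^{2})a)a$, then uses the left inverse $a'$ and the left invertive law to cancel, $(aa)a'=(((xa^{2})a)a)a'$ giving $(a'a)a=(a'a)((xa^{2})a)$ and hence $a=(xa^{2})a$. Your guess that invertibility would be spent on ``surjectivity-type'' conditions like $Sa=S$ does not match what the paper does --- its role is precisely this cancellation --- but since your Theorem~\ref{1} anchor discharges that direction anyway, nothing in your plan breaks. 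What the paper's direct proof buys is an argument that would survive even if Theorem \ref{1} were unavailable; what your route buys is brevity and the observation that the theorem as stated has essentially no content beyond Theorem \ref{1} together with the (i)$\Rightarrow$(ii) computation.
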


$(i)$ $S$ is intra-regular.

$(ii)$ $R\cap L=RL,$ where $R$ and $L$ are any left and right ideals of $S$
respectively.

\begin{proof}
$(i)\Longrightarrow (ii):$ Assume that $S\ $is intra-regular LA-semigroup$\ $%
with left identity and let $a\in S,$ then there exist $x,y\in S$ such that $%
a=(xa^{2})y.$ Let $R$ and $L$ be any left and right ideals of $S$
respectively, then obviously $RL\subseteq R\cap L.$ Now let $a\in R\cap L$
implies that $a\in R$ and $a\in L.$ Now by using medial law with left
identity, medial law and left invertive law, we have%
\begin{eqnarray*}
a &=&(xa^{2})y\in (Sa^{2})S=(S(aa))S=(a(Sa))S=(a(Sa))(SS) \\
&=&(aS)((Sa)S)=(Sa)((aS)S)=(Sa)((SS)a)=(Sa)(Sa) \\
&\subseteq &(SR)(SL)=((SS)R)(SL)=((RS)S)(SL)\subseteq RL.
\end{eqnarray*}

This shows that $R\cap L=RL.$

$(ii)\Longrightarrow (i):$ Let $S$ be a left invertible LA-semigroup with
left identity, then for $a\in S$ there exists $a^{^{\prime }}\in S$ such
that $a^{^{\prime }}a=e.$ Since $a^{2}S$ is a right ideal and also a left
ideal of $S\ $such that $a^{2}\in a^{2}S$, therefore by using given
assumption, medial law with left identity and left invertive law, we have%
\begin{eqnarray*}
a^{2} &\in &a^{2}S\cap
a^{2}S=(a^{2}S)(a^{2}S)=a^{2}((a^{2}S)S)=a^{2}((SS)a^{2}) \\
&=&(aa)(Sa^{2})=((Sa^{2})a)a.
\end{eqnarray*}

Thus we get, $a^{2}=((xa^{2})a)a$ for some $x\in S.$

Now by using left invertive law, we have%
\begin{eqnarray*}
(aa)a^{^{\prime }} &=&(((xa^{2})a)a)a^{^{\prime }} \\
(a^{^{\prime }}a)a &=&(a^{^{\prime }}a)(((xa^{2})a) \\
a &=&(xa^{2})a.
\end{eqnarray*}

This shows that $S$ is intra-regular.
\end{proof}

\begin{lemma}
\cite{nav} \label{ideal idemp}Every two-sided ideal of an intra-regular
LA-semigroup $S$ with left identity is idempotent.
\end{lemma}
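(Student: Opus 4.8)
The plan is to prove the two inclusions $A^{2}\subseteq A$ and $A\subseteq A^{2}$ separately, where $A$ denotes the given two-sided ideal of the intra-regular LA-semigroup $S$ with left identity. The first inclusion is immediate: since $A\subseteq S$ and $A$ is a right ideal, $A^{2}=AA\subseteq AS\subseteq A$ (one could equally well use the left-ideal property $SA\subseteq A$). So essentially all the work lies in establishing $A\subseteq A^{2}$, and this is where I would spend my effort.

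For the reverse inclusion I would begin with an arbitrary $a\in A$ and invoke intra-regularity to fix $x,y\in S$ with $a=(xa^{2})y=(x(aa))y$. The aim is to rearrange this product until it is visibly a product of two elements of $A$. First I would use the left-identity identity $p(qr)=q(pr)$ with $p=x$, $q=a$, $r=a$ to reposition a copy of $a$, obtaining $x(aa)=a(xa)$ and hence $a=(a(xa))y$. Then I would apply the left invertive law $(pq)r=(rq)p$ with $p=a$, $q=xa$, $r=y$ to swap $a$ and $y$, giving $a=(y(xa))a$. Now the two factors are $y(xa)$ and $a$: since $a\in A$ and $A$ is a left ideal, $xa\in SA\subseteq A$, and therefore $y(xa)\in SA\subseteq A$ as well, so $a=(y(xa))a\in AA=A^{2}$. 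As $a$ was arbitrary, $A\subseteq A^{2}$, and combined with the first inclusion this yields $A=A^{2}$, i.e. $A$ is idempotent.

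The main obstacle I anticipate is discovering the correct two-step rearrangement rather than carrying it out: the expression $(xa^{2})y$ has the ``spare'' factor $y$ sitting on the outside right, and to absorb it into $A$ one must maneuver it into a left-multiplier position, because the only closure property that helps is $SA\subseteq A$. The identity $p(qr)=q(pr)$ repositions the exploitable copy of $a$, and only afterwards does the left invertive law push $y$ inward. Applying the laws in the wrong order (for instance invoking the left invertive law first) tends to leave $y$ stranded on the outside, yielding merely $a\in AS$ rather than $a\in A^{2}$. Once the order is right, each factor lands in $A$ by a single use of the left-ideal axiom, and no appeal to the right-ideal property or to the medial and paramedial laws is required.
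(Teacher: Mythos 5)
Your proof is correct: the chain $a=(xa^{2})y=(x(aa))y=(a(xa))y=(y(xa))a$, justified by the left-identity identity $a(bc)=b(ac)$ followed by the left invertive law, together with $y(xa)\in SA\subseteq A$, is exactly the standard manipulation this paper uses in its other arguments (the lemma itself is only cited from the reference, not reproved here). Your observation that only the left-ideal property is needed is also consistent with the paper's Lemma on left ideals being idempotent, so there is nothing to correct.
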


\begin{theorem}
In an LA-semigroup $S$ with left identity, the following conditions are
equivalent.
\end{theorem}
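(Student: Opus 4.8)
The plan is to establish the stated equivalence as a cycle of implications, and the device that drives every step is a single observation: for each $a\in S$ the principal set $Sa$ is a two-sided ideal of $S$ that contains $a$. Indeed $Sa$ is a left ideal since $s(ta)=t(sa)\in Sa$ by the identity $x(yz)=y(xz)$, it is a right ideal since $(sa)t=(ta)s\in Sa$ by the left invertive law, and $a=ea\in Sa$. Consequently any hypothesis phrased for \emph{all} two-sided ideals may be specialised to $Sa$, which is exactly what converts an ideal-theoretic statement back into the element-wise condition $a=(xa^{2})y$. I take the conditions to be that $S$ is intra-regular, that every two-sided ideal is idempotent, and that $A\cap B=AB$ for all two-sided ideals $A,B$.

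First I would treat the direction from intra-regularity to the ideal identities. For two-sided ideals $A,B$ the inclusion $AB\subseteq A\cap B$ is automatic, since $AB\subseteq AS\subseteq A$ and $AB\subseteq SB\subseteq B$. For the reverse inclusion I would take $a\in A\cap B$, write $a=(xa^{2})y$ by intra-regularity, and push it into $AB$ by the computation $a=(x(aa))y=(a(xa))y=(y(xa))a$, using the medial law with left identity and then the left invertive law; here $xa\in SA\subseteq A$ forces $y(xa)\in A$ while $a\in B$, so $a\in AB$. Taking $A=B$ then yields idempotency of every two-sided ideal as a free corollary (this implication is also recorded as Lemma~\ref{ideal idemp}), so these forward implications are routine.

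The substantive step, and the one I expect to be the main obstacle, is the return from an ideal condition to intra-regularity. Applying idempotency to the ideal $Sa$ gives $a\in Sa=(Sa)(Sa)=(SS)(aa)=Sa^{2}$ by the medial law, so $a=ca^{2}$ for some $c\in S$; the difficulty is that this one-sided form is not visibly of the shape $(xa^{2})y$. I would break the deadlock with the paramedial law: writing $c=ec$ gives $a=(ec)(aa)=(aa)(ce)=a^{2}d$ with $d=ce$, and then $da=d(a^{2}d)=a^{2}d^{2}$. Feeding this back through $a=(aa)d=(da)a=(a^{2}d^{2})a$ and one more paramedial flip $a^{2}d^{2}=d^{2}a^{2}$ produces $a=(d^{2}a^{2})a\in(Sa^{2})S$, which is precisely intra-regularity with $x=d^{2}$ and $y=a$. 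Getting this chain to terminate in the $(xa^{2})y$ pattern, rather than spiralling through ever higher powers of $a$, is the delicate part and is where the paramedial law (and hence the left identity) is genuinely used.

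Finally I would assemble the cycle: intra-regularity gives $A\cap B=AB$, specialising to $A=B$ gives idempotency of two-sided ideals, and idempotency of the principal ideals $Sa$ returns intra-regularity. Any further ideal-theoretic conditions appearing in the statement, formulated for left, right, quasi-, bi- or $(1,2)$-ideals, I would fold into the same cycle by the identical principal-ideal substitution, since each such principal object again contains $a$ and, under intra-regularity, each element $a=(xa^{2})y$ can be rewritten into the required product by repeated use of the medial, paramedial and left invertive laws.
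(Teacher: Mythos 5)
The conditions you guessed are not the ones the paper attaches to this theorem: condition $(ii)$ here is $A=(SA)^{2}$ for every left ideal $A$ of $S$, not idempotency of two-sided ideals or $A\cap B=AB$. (The identity $R\cap L=RL$ is the subject of a \emph{different} theorem in the paper, stated for left invertible $S$.) So as written your proposal establishes a correct equivalence, but not the stated one, and the specific shape $(SA)^{2}$ is never engaged; your closing remark that any further ideal condition "folds into the same cycle" is too vague to count as a proof of $(i)\Rightarrow(ii)$, which needs the extra observations that $SA$ is itself a left ideal (so that the idempotency lemma applies to it, giving $(SA)^{2}=SA$) and that $A=AA\subseteq SA\subseteq A$.

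That said, your machinery is sound and in fact transfers with almost no work, and on the reverse implication you do more than the paper does. The paper's proof of $(ii)\Rightarrow(i)$ is just: $A=(SA)^{2}\subseteq A^{2}$, so every left ideal is idempotent, then cite Lemma~\ref{iffff} from \cite{nav} as a black box. Your principal-ideal computation --- $a\in Sa=(Sa)(Sa)=Sa^{2}$, hence $a=ca^{2}$, then $a=a^{2}d$, $da=a^{2}d^{2}$, $a=(da)a=(d^{2}a^{2})a\in(Sa^{2})S$ via the paramedial and left invertive laws --- is precisely a self-contained proof of that cited lemma in the presence of a left identity, and each step checks out. Your forward direction likewise reproves Lemma~\ref{ideal idemp} directly instead of citing it. So the comparison is: the paper's argument is two lines resting on two imported lemmas, while yours is longer but essentially lemma-free; to turn yours into a proof of the actual theorem you need only replace your guessed condition $(ii)$ by $A=(SA)^{2}$, note $(SA)^{2}\subseteq A^{2}\subseteq A$ for the backward direction, and add the sandwich $A=AA\subseteq SA=(SA)^{2}\subseteq A$ for the forward one.
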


$(i)$ $S$ is intra-regular.

$(ii)$ $A=(SA)^{2},$ where $A$ is any left ideal of S.

\begin{proof}
$(i)$ $\Longrightarrow (ii):$ Let $A$ be a left ideal of an intra-regular
LA-semigroup $S$ with left identity$,$ then $SA\subseteq A$ and by Lemma \ref%
{ideal idemp}, $(SA)^{2}=SA\subseteq A.$ Now $A=AA\subseteq SA=(SA)^{2},$
which implies that $A=(SA)^{2}.$

$(ii)$ $\Longrightarrow (i):$ Let $A$ be a left ideal of $S,$ then $%
A=(SA)^{2}\subseteq A^{2},$ which implies that $A$ is idempotent and by
using Lemma \ref{iffff}, $S$ is intra-regular.
\end{proof}

\begin{theorem}
In an intra-regular LA-semigroup $S$ with left identity, the following
conditions are equivalent.
\end{theorem}

$(i)$ $A$ is a bi-(generalized bi-) ideal of $S$.

$(ii)$ $(AS)A=A$ and $A^{2}=A.$

\begin{proof}
$(i)\Longrightarrow (ii):$ Let $A$ be a bi-ideal of an intra-regular
LA-semigroup $S$ with left identity$,$ then $(AS)A\subseteq A$. Let $a\in A$%
, then since $S$ is intra-regular so there exist $x,$ $y\in S$ such that $%
a=(xa^{2})y.$ Now by using medial law with left identity$,$ left invertive
law$,$ medial law and paramedial law$,$ we have%
\begin{eqnarray*}
a &=&(xa^{2})y=(x(aa))y=(a(xa))y=(y(xa))a \\
&=&(y(x((xa^{2})y)))a=(y((xa^{2})(xy)))a \\
&=&((xa^{2})(y(xy)))a=((x(aa))(y(xy)))a \\
&=&((a(xa))(y(xy)))a=((ay)((xa)(xy)))a \\
&=&((xa)((ay)(xy)))a=((xa)((ax)y^{2}))a \\
&=&((y^{2}(ax))(ax))a=(a((y^{2}(ax))x))a\in (AS)A.
\end{eqnarray*}

Thus $(AS)A=A$ holds. Now by using medial law with left identity$,$ left
invertive law$,$ paramedial law and medial law$,$ we have%
\begin{eqnarray*}
a &=&(xa^{2})y=(x(aa))y=(a(xa))y=(y(xa))a=(y(x((xa^{2})y)))a \\
&=&(y((xa^{2})(xy)))a=((xa^{2})(y(xy)))a=((x(aa))(y(xy)))a \\
&=&((a(xa))(y(xy)))a=(((y(xy))(xa))a)a=(((ax)((xy)y))a)a \\
&=&(((ax)(y^{2}x))a)a=(((ay^{2})(xx))a)a=(((ay^{2})x^{2})a)a \\
&=&(((x^{2}y^{2})a)a)a=(((x^{2}y^{2})((x(aa))y))a)a \\
&=&(((x^{2}y^{2})((a(xa))y))a)a=(((x^{2}(a(xa)))(y^{2}y))a)a \\
&=&(((a(x^{2}(xa)))y^{3})a)a=(((a((xx)(xa)))y^{3})a)a \\
&=&(((a((ax)(xx)))y^{3})a)a=((((ax)(ax^{2}))y^{3})a)a \\
&=&((((aa)(xx^{2}))y^{3})a)a=(((y^{3}x^{3})(aa))a)a \\
&=&((a((y^{3}x^{3})a))a)a\subseteq ((AS)A)A\subseteq AA=A^{2}.
\end{eqnarray*}

Hence $A=A^{2}$ holds.

$(ii)\Longrightarrow (i)$ is obvious.
\end{proof}

\begin{theorem}
In an intra-regular LA-semigroup $S$ with left identity, the following
conditions are equivalent.
\end{theorem}

$(i)$ $A$ is a quasi ideal of $S$.

$(ii)$ $SQ\cap QS=Q.$

\begin{proof}
$(i)\Longrightarrow (ii):$ Let $Q$ be a quasi ideal of an intra-regular
LA-semigroup $S$ with left identity$,$ then $SQ\cap QS\subseteq Q$. Let $%
q\in Q$, then since $S$ is intra-regular so there exist $x$, $y\in S$ such
that $q=(xq^{2})y.$ Let $pq\in SQ,$ then by using medial law with left
identity$,$ medial law and paramedial law$,$ we have%
\begin{eqnarray*}
pq &=&p((xq^{2})y)=(xq^{2})(py)=(x(qq))(py)=(q(xq))(py) \\
&=&(qp)((xq)y)=(xq)((qp)y)=(y(qp))(qx) \\
&=&q((y(qp))x)\in QS.
\end{eqnarray*}

Now let $qy\in QS,$ then by using left invertive law, medial law with left
identity and paramedial law$,$ we have%
\begin{eqnarray*}
qp &=&((xq^{2})y)p=(py)(xq^{2})=(py)(x(qq))=x((py)(qq)) \\
&=&x((qq)(yp))=(qq)(x(yp))=((x(yp))q)q\in SQ.
\end{eqnarray*}

Hence $QS=SQ.$ As by using medial law with left identity and left invertive
law$,$ we have%
\begin{equation*}
q=(xq^{2})y=(x(qq))y=(q(xq))y=(y(xq))q\in SQ.
\end{equation*}%
Thus $q\in SQ\cap QS$ implies that $SQ\cap QS=Q$.

$(ii)\Longrightarrow (i)$ is obvious.
\end{proof}

\begin{theorem}
In an intra-regular LA-semigroup $S$ with left identity, the following
conditions are equivalent.
\end{theorem}

$(i)$ $A$ is an interior ideal of $S$.

$(ii)$ $(SA)S=A.$

\begin{proof}
$(i)$ $\Longrightarrow (ii):$ Let $A$ be an interior ideal of an
intra-regular LA-semigroup $S$ with left identity$,$ then $(SA)S\subseteq A$%
. Let $a\in A$, then since $S$ is intra-regular so there exist $x,$ $y\in S$
such that $a=(xa^{2})y.$ Now by using medial law with left identity, left
invertive law and paramedial law$,$ we have%
\begin{eqnarray*}
a &=&(xa^{2})y=(x(aa))y=(a(xa))y=(y(xa))a=(y(xa))((xa^{2})y) \\
&=&(((xa^{2})y)(xa))y=((ax)(y(xa^{2})))y=(((y(xa^{2}))x)a)y\in (SA)S.
\end{eqnarray*}

Thus $(SA)S=A.$

$(ii)$ $\Longrightarrow (i)$ is obvious.
\end{proof}

\begin{theorem}
In an intra-regular LA-semigroup $S$ with left identity, the following
conditions are equivalent.
\end{theorem}

$(i)$ $A$ is a $(1,2)$-ideal of $S$.

$(ii)$ $(AS)A^{2}=A$ and $A^{2}=A$ .

\begin{proof}
$(i)$ $\Longrightarrow (ii):$ Let $A$ be a $(1,2)$-ideal of an intra-regular
LA-semigroup $S$ with left identity$,$ then $(AS)A^{2}\subseteq A$ and $%
A^{2}\subseteq A$. Let $a\in A$, then since $S$ is intra-regular so there
exist $x,$ $y\in S$ such that $a=(xa^{2})y.$ Now by using medial law with
left identity, left invertive law and paramedial law$,$ we have%
\begin{eqnarray*}
a &=&(xa^{2})y=(x(aa))y=(a(xa))y=(y(xa))a \\
&=&(y(x((xa^{2})y)))a=(y((xa^{2})(xy)))a=((xa^{2})(y(xy)))a \\
&=&(((xy)y)(a^{2}x))a=((y^{2}x)(a^{2}x))a=(a^{2}((y^{2}x)x))a \\
&=&(a^{2}(x^{2}y^{2}))a=(a(x^{2}y^{2}))a^{2}=(a(x^{2}y^{2}))(aa)\in
(AS)A^{2}.
\end{eqnarray*}

Thus $(AS)A^{2}=A.$ Now by using medial law with left identity$,$ left
invertive law$,$ paramedial law and medial law$,$ we have%
\begin{eqnarray*}
a &=&(xa^{2})y=(x(aa))y=(a(xa))y=(y(xa))a \\
&=&(y(xa))((xa^{2})y)=(xa^{2})((y(xa))y)=(x(aa))((y(xa))y) \\
&=&(a(xa))((y(xa))y)=(((y(xa))y)(xa))a=((ax)(y(y(xa))))a \\
&=&((((xa^{2})y)x)(y(y(xa))))a=(((xy)(xa^{2}))(y(y(xa))))a \\
&=&(((xy)y)((xa^{2})(y(xa))))a=((y^{2}x)((x(aa))(y(xa))))a \\
&=&((y^{2}x)((xy)((aa)(xa))))a=((y^{2}x)((aa)((xy)(xa))))a \\
&=&((aa)((y^{2}x)((xy)(xa))))a=((aa)((y^{2}x)((xx)(ya))))a \\
&=&((((xx)(ya))(y^{2}x))(aa))a=((((ay)(xx))(y^{2}x))(aa))a \\
&=&((((x^{2}y)a)(y^{2}x))(aa))a=(((xy^{2})(a(x^{2}y)))(aa))a \\
&=&((a((xy^{2})(x^{2}y)))(aa))a=((a(x^{3}y^{3}))(aa))a \\
&\in &((AS)A^{2})A\subseteq AA=A^{2}.
\end{eqnarray*}

Hence $A^{2}=A.$

$(ii)$ $\Longrightarrow (i)$ is obvious.
\end{proof}

\begin{lemma}
\cite{nav}\label{ag}Every non empty subset $A$ of an intra-regular
LA-semigroup $S$ with left identity is a left ideal of $S$ if and only if it
is a right ideal of $S$.
\end{lemma}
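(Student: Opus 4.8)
The plan is to prove both implications by the same device: given $a\in A$ and $s\in S$, I would invoke the intra-regular representation $a=(xa^{2})y$ with $x,y\in S$, and then rewrite the relevant product using the left invertive law, the medial law, the paramedial law, and the identity $a(bc)=b(ac)$ that holds because $S$ has a left identity. The whole art is to steer the rewriting so that the factor lying in $A$ ends up on the side where the standing hypothesis ($SA\subseteq A$ or $AS\subseteq A$) can be applied.

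For the direction ``left ideal $\Rightarrow$ right ideal'' I would assume $SA\subseteq A$ and show $as\in A$. First observe that $a^{2}=a\cdot a\in SA\subseteq A$, since $a\in S$ and $a\in A$. Then I would compute
\[ as=((xa^{2})y)s=(sy)(xa^{2})=(a^{2}x)(ys)=((ys)x)a^{2}, \]
applying the left invertive law, then the paramedial law, then the left invertive law again. Because $(ys)x\in S$ and $a^{2}\in A$, the last term lies in $SA\subseteq A$; hence $as\in A$ and $A$ is a right ideal.

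For the converse I would assume $AS\subseteq A$ and show $sa\in A$. Now the target is $AS$ rather than $SA$, so I would drive $a$ toward the left factor instead. Using $a(bc)=b(ac)$, the rewriting $xa^{2}=x(aa)=a(xa)$, and the medial law, I would compute
\[ sa=s((xa^{2})y)=(xa^{2})(sy)=(a(xa))(sy)=(as)((xa)y). \]
Since $A$ is a right ideal we have $as\in AS\subseteq A$, and $(xa)y\in S$, so the last term lies in $AS\subseteq A$; hence $sa\in A$ and $A$ is a left ideal.

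The main obstacle I anticipate is not any individual identity but the bookkeeping of where the $A$-factor sits. The most tempting manipulations are self-defeating: rewriting $as$ by the medial law as $(sa)(y(xa))$ merely reproduces the statement $as\in AS$ and loops back on itself. The decisive point is that in the first implication one must expose $a^{2}$ as the right-hand factor of a product whose left factor lies in $S$ (so the left-ideal hypothesis bites), whereas in the second one must expose $as$ as the left-hand factor (so the right-ideal hypothesis bites); selecting the order of the left invertive, paramedial and medial laws to land exactly in that configuration is the entire content of the argument.
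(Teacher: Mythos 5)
Your proof is correct: each application of the left invertive, medial, and paramedial laws and of the identity $a(bc)=b(ac)$ checks out, and the two chains $as=((xa^{2})y)s=(sy)(xa^{2})=(a^{2}x)(ys)=((ys)x)a^{2}\in SA$ and $sa=(xa^{2})(sy)=(a(xa))(sy)=(as)((xa)y)\in AS$ land exactly where the respective hypotheses apply. Note that the paper itself gives no proof of this lemma --- it is quoted from the cited reference --- so there is nothing in the text to compare against; your argument is a valid, self-contained proof in the same computational style as the paper's other theorems, and in fact considerably shorter than the analogous rewriting chains the authors carry out elsewhere.
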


\begin{theorem}
\label{12}In an intra-regular LA-semigroup $S$ with left identity, the
following conditions are equivalent.
\end{theorem}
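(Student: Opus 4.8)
The statement I am planning for is the culminating ``all ideals coincide'' theorem: for a non-empty subset $A$ of an intra-regular LA-semigroup $S$ with left identity, being a left ideal, a right ideal, a two-sided ideal, an interior ideal, a quasi-ideal, a bi-ideal, a generalized bi-ideal, and a $(1,2)$-ideal are all equivalent (the novelty flagged in the abstract being that the $(1,2)$-ideal and two-sided ideal notions merge). The plan is to prove this as a web of implications anchored on the two-sided ideal, using Lemma \ref{ag} to identify the left, right, and two-sided conditions at the outset, so that it suffices to show that ``two-sided ideal'' implies each of the remaining ideal types and, conversely, that each of the remaining types forces $A$ to be a left (hence two-sided) ideal.

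The forward (strong-to-weak) implications are the routine ones. Since $S=S^{2}$ (Lemma \ref{jk}) and every two-sided ideal is idempotent (Lemma \ref{ideal idemp}), a two-sided ideal $A$ satisfies $A^{2}=A$, which supplies the LA-subsemigroup requirement built into the bi-ideal, quasi-ideal and $(1,2)$-ideal definitions; the inclusions $(AS)A\subseteq AS\subseteq A$, $(SA)S\subseteq AS\subseteq A$, $SA\cap AS\subseteq A$, and $(AS)A^{2}\subseteq A$ then follow immediately from $SA\subseteq A$ and $AS\subseteq A$. So each weaker notion is implied by the two-sided condition with essentially no computation.

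The reverse (weak-to-strong) implications carry the real content, and here I would lean on the characterization theorems already proved. For an interior ideal, Theorem \ref{aw} gives $(SA)S=A$; then for $t\in S$ and $\alpha=ws'\in(SA)S=A$ one computes, using $a(bc)=b(ac)$ and $SS=S$, that $t\alpha=t(ws')=w(ts')\in(SA)S=A$, so $SA\subseteq A$ and $A$ is a left ideal, which Lemma \ref{ag} upgrades to a two-sided ideal. The quasi-ideal case runs the same way once one recalls that its characterization already established $SA=AS$, whence $SA\cap AS=SA=A$. For bi-ideals and generalized bi-ideals the analogous starting point is $(AS)A=A$ from Theorem \ref{ki}.

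The main obstacle will be precisely these last reverse implications from the \emph{weakest} notions, the generalized bi-ideal ($(AS)A\subseteq A$) and the $(1,2)$-ideal ($(AS)A^{2}\subseteq A$). Unlike the interior case, the controlling product here ends in a factor of $A$ rather than $S$, so the clean ``absorb $S$ using $SS=S$'' trick does not close up, and a naive attempt to rewrite an arbitrary $sa$ cycles back to $sa$. I expect to need the same kind of long, carefully ordered rewriting seen in this paper's bi-ideal and $(1,2)$-ideal theorems: repeatedly substitute the intra-regular expression $a=(xa^{2})y$ and apply the left invertive law, the medial law, the paramedial law, and $a(bc)=b(ac)$ until $sa$ is displayed in the form $(AS)A$ or $((AS)A^{2})A$, which lies in $A$. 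Getting that exact sequence of bracketings to terminate rather than loop is the delicate step; everything else is bookkeeping.
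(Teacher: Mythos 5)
First, a scope correction: Theorem \ref{12} is not the omnibus ``all ideals coincide'' statement you describe; its two conditions are only that $A$ is a $(1,2)$-ideal and that $A$ is a two-sided ideal. The parts of your plan concerning interior, quasi-, bi- and generalized bi-ideals belong to other theorems in the paper, each stated and proved separately. For the theorem actually at hand, your skeleton does match the paper's: the direction (two-sided $\Rightarrow$ $(1,2)$) is the easy inclusion $(AS)A^{2}\subseteq AS\subseteq A$ (the paper writes $(as)b^{2}=b((as)b)\in AS\subseteq A$ using the medial law with left identity), and the direction ($(1,2)$ $\Rightarrow$ two-sided) is reduced, exactly as you propose, to showing $SA\subseteq A$ and then invoking Lemma \ref{ag}.

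However, that reduction is where your proposal stops short of a proof. You correctly identify that one must rewrite an arbitrary product $sa$, via $a=(xa^{2})y$ together with the left invertive, medial and paramedial laws, into an element of $(AS)A^{2}$, but you explicitly leave the rewriting undone and concede that you do not know how to make it terminate rather than loop. That computation is the entire substance of the hard direction: the paper spends roughly twenty lines producing the explicit factorization $sa=(a((x^{2}y)(((y(ys))x)a)))(aa)\in (AS)A^{2}\subseteq A$, and nothing in your text certifies that such a factorization exists. As written, the proposal is a correct strategy with its central step missing; to complete it you would need to exhibit a concrete chain of identities (the paper's own chain, or an equivalent one) carrying $sa$ into $(AS)A^{2}$, after which Lemma \ref{ag} finishes the argument exactly as you say.
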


$(i)$ $A$ is a $(1,2)$-ideal of $S$.

$(ii)$ $A$ is a two-sided ideal of $S.$

\begin{proof}
$(i)$ $\Longrightarrow (ii):$ Assume that $S$ is intra-regular LA-semigroup
with left identity and let $A$ be a $(1,2)$-ideal of $S$ then, $%
(AS)A^{2}\subseteq A.$ Let $a\in A$, then since $S$ is intra-regular so
there exist $x,$ $y\in S$ such that $a=(xa^{2})y.$ Now by using medial law
with left identity$,$ left invertive law and paramedial law$,$ we have%
\begin{eqnarray*}
sa &=&s((xa^{2})y)=(xa^{2})(sy)=(x(aa))(sy)=(a(xa))(sy) \\
&=&((sy)(xa))a=((sy)(xa))((xa^{2})y)=(xa^{2})(((sy)(xa))y) \\
&=&(y((sy)(xa)))(a^{2}x)=a^{2}((y((sy)(xa)))x) \\
&=&(aa)((y((sy)(xa)))x)=(x(y((sy)(xa))))(aa) \\
&=&(x(y((ax)(ys))))(aa)=(x((ax)(y(ys))))(aa) \\
&=&((ax)(x(y(ys))))(aa)=((((xa^{2})y)x)(x(y(ys))))(aa) \\
&=&(((xy)(xa^{2}))(x(y(ys))))(aa)=(((a^{2}x)(yx))(x(y(ys))))(aa) \\
&=&((((yx)x)a^{2})(x(y(ys))))(aa)=(((y(ys))x)(a^{2}((yx)x)))(aa) \\
&=&(((y(ys))x)(a^{2}(x^{2}y)))(aa)=(a^{2}(((y(ys))x)(x^{2}y)))(aa) \\
&=&((aa)(((y(ys))x)(x^{2}y)))(aa)=(((x^{2}y)((y(ys))x))(aa))(aa) \\
&=&(a((x^{2}y)(((y(ys))x)a)))(aa)\in (AS)A^{2}\subseteq A.
\end{eqnarray*}

Hence $A$ is a left ideal of $S$ and by Lemma \ref{ag}, $A$ is a two-sided
ideal of $S.$

$(ii)$ $\Longrightarrow (i):$ Let $A$ be a two-sided ideal of $S$. Let $y\in
(AS)A^{2},$ then $y=(as)b^{2}$ for some $a,b\in A$ and $s\in S.$ Now by
using medial law with left identity$,$ we have%
\begin{equation*}
y=(as)b^{2}=(as)(bb)=b((as)b)\in AS\subseteq A.
\end{equation*}

Hence $(AS)A^{2}\subseteq A$, therefore $A$ is a $(1,2)$-ideal of $S$.
\end{proof}

\begin{lemma}
\cite{nav} \label{iffff}Let $S$ be an LA-semigroup, then $S$ is
intra-regular if and only if every left ideal of $S$ is idempotent.
\end{lemma}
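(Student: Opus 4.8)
The plan is to prove the two implications separately, treating the ``only if'' direction (intra-regular $\Rightarrow$ every left ideal idempotent) and the ``if'' direction (every left ideal idempotent $\Rightarrow$ intra-regular) in turn. In both directions the easy half of the idempotency equation is free: for any left ideal $A$ we always have $A^{2}=AA\subseteq SA\subseteq A$, since $A\subseteq S$ and $SA\subseteq A$. So the whole content is the reverse inclusion $A\subseteq A^{2}$ in one direction and the construction of an intra-regular representation in the other.

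For the forward direction I would fix a left ideal $A$ and an element $a\in A$, and use intra-regularity to write $a=(xa^{2})y$ for some $x,y\in S$. The goal is to massage this expression until $a$ appears as a product of two members of $A$. First rewrite $xa^{2}=x(aa)=a(xa)$ using the medial law with left identity $p(qr)=q(pr)$, so that $a=(a(xa))y$; then apply the left invertive law to pull the last factor across, obtaining $a=(y(xa))a$. Since $a\in A$ and $A$ is a left ideal, $xa\in SA\subseteq A$ and hence $y(xa)\in SA\subseteq A$; therefore $a=(y(xa))a\in AA=A^{2}$, which gives $A\subseteq A^{2}$ and so $A=A^{2}$.

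For the backward direction I would feed the hypothesis the principal left ideal $Sa$, which contains $a=ea$ and is itself a left ideal. Idempotency gives $Sa=(Sa)(Sa)$, and the medial law together with $SS=S$ collapses this to $(Sa)(Sa)=(SS)(aa)=Sa^{2}$; hence $a\in Sa^{2}$. It remains to convert membership in $Sa^{2}$ into an intra-regular representation $a\in (Sa^{2})S$: rewriting $Sa^{2}=(SS)a^{2}=(a^{2}S)S$ by the left invertive law, and then $a^{2}S=(aa)(SS)=(SS)(aa)=Sa^{2}$ by the paramedial law, yields $(a^{2}S)S=(Sa^{2})S$, so $a=(sa^{2})t$ for some $s,t\in S$ and $S$ is intra-regular.

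The main obstacle I anticipate is steering the forward computation so that the final product lies in $AA$ rather than in $AS$ or $SA$: the left invertive law naturally produces right-hand factors, so the order in which $x(aa)=a(xa)$ and the left invertive law are applied is exactly what lands the trailing factor as the original $a\in A$. In the backward direction the only delicate point is the bookkeeping that turns $Sa^{2}$ into $(Sa^{2})S$ via the paramedial rewriting $a^{2}S=Sa^{2}$, together with the observation that $S=S^{2}$ is available for free by applying the idempotency hypothesis to the left ideal $S$ itself.
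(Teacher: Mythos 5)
The paper does not actually prove this lemma; it is imported from \cite{nav} without proof, so there is no in-paper argument to set yours against. On its own terms your computation is correct and is exactly in the house style of this paper: the forward chain $a=(x(aa))y=(a(xa))y=(y(xa))a\in AA$ is the same manipulation the paper performs inside its quasi-ideal and interior-ideal theorems, and your backward direction via $Sa=(Sa)(Sa)=(SS)(aa)=Sa^{2}$ followed by $Sa^{2}=(SS)a^{2}=(a^{2}S)S=(Sa^{2})S$ is essentially the computation in Theorem \ref{ji}. Your observation that the inclusion $A^{2}\subseteq A$ is free and that all the content lies in $A\subseteq A^{2}$ is also the right way to organize the argument.

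The one genuine issue is hypothesis bookkeeping. As printed, the lemma says only ``let $S$ be an LA-semigroup,'' with no left identity, yet every nontrivial step of your proof needs one: the rewriting $x(aa)=a(xa)$ uses the identity $a(bc)=b(ac)$, which the paper derives only for LA-semigroups \emph{with left identity}; the paramedial law $(ab)(cd)=(dc)(ba)$ that you use to get $a^{2}S=Sa^{2}$ likewise requires a left identity; and $a=ea\in Sa$, the fact that $Sa$ is a left ideal, and $S=S^{2}$ are all consequences of the left identity. Without it, $a$ need not belong to $Sa$ and the principal left ideal generated by $a$ is not simply $Sa$, so the backward direction as you have written it does not get started. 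You should either add ``with left identity'' to the statement --- which is the only setting in which this paper ever invokes the lemma, e.g.\ in the theorem characterizing left ideals by $A=(SA)^{2}$ --- or find an argument avoiding $a(bc)=b(ac)$ and the paramedial law. As written, your proof establishes the lemma for LA-semigroups with left identity, not the literal statement.
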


\begin{lemma}
\cite{nav}\label{qqq}Every non empty subset $A$ of an intra-regular
LA-semigroup $S$ with left identity is a two-sided ideal of $S$ if and only
if it is a quasi ideal of $S$.
\end{lemma}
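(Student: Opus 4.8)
The plan is to handle the two implications separately, the forward one being essentially immediate and the reverse one carrying all the real content. If $A$ is a two-sided ideal then $SA \subseteq A$ and $AS \subseteq A$, so $SA \cap AS \subseteq SA \subseteq A$, which is exactly the defining condition for $A$ to be a quasi ideal; note that this direction uses neither the left identity nor intra-regularity.

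For the converse, suppose $A$ is a quasi ideal, i.e. $SA \cap AS \subseteq A$. I would aim to prove that $A$ is a left ideal, since Lemma \ref{ag} then immediately promotes $A$ to a two-sided ideal. So I fix $s \in S$ and $a \in A$ and try to show $sa \in A$. Because $sa$ visibly lies in $SA$, it suffices by the quasi-ideal hypothesis to show that $sa$ also lies in $AS$, for then $sa \in SA \cap AS \subseteq A$, giving $SA \subseteq A$.

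The heart of the argument is exhibiting $sa$ as a member of $AS$. Here I would invoke intra-regularity to write $a = (xa^{2})y$ with $x,y \in S$, substitute into $sa = s((xa^{2})y)$, and then repeatedly rewrite using the three available identities: the medial law $(ab)(cd)=(ac)(bd)$, the paramedial law $(ab)(cd)=(dc)(ba)$, and the left-identity consequence $u(vw)=v(uw)$. This is the same style of manipulation already performed in the quasi-ideal equivalence theorem to prove $SQ \subseteq QS$: one first reaches $(xa^{2})(sy)=(a(xa))(sy)$, applies the medial law to get $(as)((xa)y)$, uses $u(vw)=v(uw)$ and then the paramedial law to reach the shape $(y(as))(ax)$, and finally applies $u(vw)=v(uw)$ once more to obtain $a\bigl((y(as))x\bigr)$, which plainly lies in $AS$. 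Consequently $sa \in SA \cap AS \subseteq A$, so $SA \subseteq A$ and Lemma \ref{ag} closes the proof.

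The only obstacle is the bookkeeping in this rewrite chain: at each step one must verify that the bracketing genuinely matches the law being applied and must track which factor is destined to become the leading $a \in A$. There is no conceptual hurdle, because the representation $a = (xa^{2})y$ furnished by intra-regularity is precisely the tool that lets a left multiple $sa$ be re-expressed with a copy of $a$ pulled out to the front.
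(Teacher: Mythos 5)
Your proof is correct: the forward direction is the trivial containment $SA\cap AS\subseteq SA\subseteq A$, and your rewrite chain $sa=s((xa^{2})y)=(xa^{2})(sy)=(a(xa))(sy)=(as)((xa)y)=(xa)((as)y)=(y(as))(ax)=a((y(as))x)\in AS$ checks out step by step, after which the quasi-ideal condition and Lemma \ref{ag} finish the argument. The paper only cites this lemma from \cite{nav} without reproducing a proof, but your computation is literally the one the paper performs (with $p,q$ in place of $s,a$) to show $SQ\subseteq QS$ in its quasi-ideal characterization theorem, so your approach matches the paper's methods exactly.
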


\begin{theorem}
A two-sided ideal of an intra-regular LA-semigroup $S$ with left identity is
minimal if and only if it is the intersection of two minimal two-sided
ideals of $S$.

\begin{proof}
Let $S$ be intra-regular LA-semigroup and $Q$ be a minimal two-sided ideal
of $S$, let $a\in Q$. As $S(Sa)\subseteq Sa$ and $S(aS)\subseteq a(SS)=aS,$
which shows that $Sa$ and $aS$ are left ideals of $S,$ so by Lemma \ref{ag}, 
$Sa$ and $aS$ are two-sided ideals of $S$.

Now%
\begin{eqnarray*}
S(Sa\cap aS)\cap (Sa\cap aS)S &=&S(Sa)\cap S(aS)\cap (Sa)S\cap (aS)S \\
&\subseteq &(Sa\cap aS)\cap (Sa)S\cap Sa\subseteq Sa\cap aS.
\end{eqnarray*}

This implies that $Sa\cap aS$ is a quasi ideal of $S,$ so by using \ref{qqq}%
, $Sa\cap aS$ is a two-sided ideal of $S$.

Also since $a\in Q$, we have%
\begin{equation*}
Sa\cap aS\subseteq SQ\cap QS\subseteq Q\cap Q\subseteq Q\text{.}
\end{equation*}

Now since $Q$ is minimal, so $Sa\cap aS=Q,$ where $Sa$ and $aS$ are minimal
two-sided ideals of $S$, because let $I$ be an two-sided ideal of $S$ such
that $I\subseteq Sa,$ then $I\cap aS\subseteq Sa\cap aS\subseteq Q,$ which
implies that $I\cap aS=Q.$ Thus $Q\subseteq I.$ Therefore, we have%
\begin{equation*}
Sa\subseteq SQ\subseteq SI\subseteq I,\text{ gives }Sa=I.
\end{equation*}

Thus $Sa$ is a minimal two-sided ideal of $S$. Similarly $aS$ is a minimal
two-sided ideal of $S.$

Conversely, let $Q=I\cap J$ be a two-sided ideal of $S$, where $I$ and $J$
are minimal two-sided ideals of $S,$ then by using \ref{qqq}, $Q$ is a quasi
ideal of $S$, that is $SQ\cap QS\subseteq Q.$ Let $Q^{^{\prime }}$ be a
two-sided ideal of $S$ such that $Q^{^{\prime }}\subseteq Q$, then%
\begin{equation*}
SQ^{^{\prime }}\cap Q^{^{\prime }}S\subseteq SQ\cap QS\subseteq Q,\text{
also }SQ^{^{\prime }}\subseteq SI\subseteq I\text{ and }Q^{^{\prime
}}S\subseteq JS\subseteq J\text{.}
\end{equation*}

Now 
\begin{equation*}
S(SQ^{^{\prime }})=\left( SS\right) (SQ^{^{\prime }})=(Q^{^{\prime
}}S)\left( SS\right) =(Q^{^{\prime }}S)S=\left( SS\right) Q^{^{\prime
}}=SQ^{^{\prime }},
\end{equation*}%
which implies that $SQ^{^{\prime }}$ is a left ideal and hence a two-sided
ideal by Lemma \ref{ag}. Similarly $Q^{^{\prime }}S$ is a two-sided ideal of 
$S$.

But since $I$ and $J$ are minimal two-sided ideals of $S$, therefore $%
SQ^{^{\prime }}=I$ and $Q^{^{\prime }}S=J.$ But $Q=I\cap J,$ which implies
that, $Q=SQ^{^{\prime }}\cap Q^{^{\prime }}S\subseteq Q^{^{\prime }}.$ This
give us $Q=Q^{^{\prime }}$ and hence $Q$ is minimal.
\end{proof}
\end{theorem}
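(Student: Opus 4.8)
The plan is to prove both implications by manufacturing concrete minimal two-sided ideals out of a single element, leaning throughout on the equivalences between one-sided, two-sided, and quasi ideals supplied by Lemmas \ref{ag} and \ref{qqq}, together with $S=S^{2}$ from Lemma \ref{jk}.

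For the forward direction, I would fix a minimal two-sided ideal $Q$ and an element $a\in Q$. First I would check that $Sa$ and $aS$ are two-sided ideals: the inclusions $S(Sa)\subseteq Sa$ and $S(aS)\subseteq a(SS)=aS$ show each is a left ideal, and Lemma \ref{ag} promotes a left ideal to a two-sided ideal. Next I would verify that $Sa\cap aS$ is a quasi ideal by expanding $S(Sa\cap aS)\cap(Sa\cap aS)S$ into the four-fold product $S(Sa)\cap S(aS)\cap(Sa)S\cap(aS)S$ and bounding it inside $Sa\cap aS$; Lemma \ref{qqq} then makes $Sa\cap aS$ two-sided. Since $a\in Q$ and $Q$ is two-sided, $Sa\cap aS\subseteq SQ\cap QS\subseteq Q$, so minimality of $Q$ gives $Sa\cap aS=Q$. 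It then remains to prove $Sa$ (and symmetrically $aS$) minimal: given a two-sided ideal $I\subseteq Sa$, I would intersect with $aS$, note $I\cap aS\subseteq Sa\cap aS=Q$, invoke minimality of $Q$ to get $I\cap aS=Q$ and hence $Q\subseteq I$, and then run $Sa\subseteq SQ\subseteq SI\subseteq I$ to conclude $Sa=I$. This displays $Q=Sa\cap aS$ as an intersection of two minimal two-sided ideals.

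For the converse, suppose $Q=I\cap J$ with $I,J$ minimal two-sided ideals. By Lemma \ref{qqq}, $Q$ is a quasi ideal, so $SQ\cap QS\subseteq Q$. Taking any two-sided ideal $Q'\subseteq Q$, I would record $SQ'\subseteq SI\subseteq I$ and $Q'S\subseteq JS\subseteq J$, and then show $SQ'$ is a two-sided ideal through the chain $S(SQ')=(SS)(SQ')=(Q'S)(SS)=(Q'S)S=(SS)Q'=SQ'$, which uses $S=S^{2}$, the paramedial law, and the left invertive law, finished off by Lemma \ref{ag}; the argument for $Q'S$ is parallel. Minimality of $I$ and $J$ then forces $SQ'=I$ and $Q'S=J$, so $Q=I\cap J=SQ'\cap Q'S\subseteq Q'$, and combined with $Q'\subseteq Q$ this yields $Q'=Q$, establishing minimality of $Q$.

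The main obstacle will be the bookkeeping in the two structural computations: confirming that $Sa\cap aS$ is a quasi ideal while tracking which of the four products collapse, and steering $S(SQ')=SQ'$ through exactly the right sequence of left invertive and paramedial rewrites. A secondary point to watch is nonemptiness, which is what legitimizes each appeal to minimality. Because $S$ carries a left identity $e$, one has $Q'=eQ'\subseteq SQ'$, so $SQ'$ is a nonempty two-sided ideal inside $I$; and in the forward direction $I(aS)\subseteq I\cap aS$ with $I(aS)\neq\emptyset$ guarantees $I\cap aS$ is a nonempty two-sided ideal, so that minimality of $Q$ genuinely applies.
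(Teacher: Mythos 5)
Your proposal follows essentially the same route as the paper's own proof, step for step: the same promotion of $Sa$ and $aS$ to two-sided ideals via Lemma \ref{ag}, the same quasi-ideal argument for $Sa\cap aS$ via Lemma \ref{qqq}, the same minimality arguments for $Sa$ and $aS$, and the identical chain $S(SQ^{\prime})=SQ^{\prime}$ in the converse. The only addition is your explicit check that the relevant ideals are nonempty before invoking minimality, a point the paper leaves implicit.
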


\end{document}